\numberwithin{equation}{section}
\theoremstyle{plain}
\newtheorem{thm}{Theorem}[section]
\newtheorem{lem}[thm]{Lemma}
\newtheorem{cor}[thm]{Corollary}
\theoremstyle{definition}
\newtheorem{exa}[thm]{Example}
\theoremstyle{remark}
\newcommand{\A}{{\mathbf A}}
\newcommand{\B}{{\mathbf B}}
\newcommand{\D}{{\mathbf D}}
\newcommand{\E}{{\mathbf E}}
\newcommand{\G}{{\mathbf G}}
\renewcommand{\P}{\mathbf{P}}
\newcommand{\V}{\mathbf{V}}
\newcommand{\N}{{\mathbb{N}}}
\newcommand{\Z}{{\mathbb{Z}}}
\newcommand{\id}{\mathrm{id}}
\newcommand{\Clo}{\mathrm{Clo}}
\newcommand{\Mlt}{\mathrm{Mlt}}
\newcommand{\st}{\ : \ }
\newlength{\ldprobleft} \setlength{\ldprobleft}{0.045\textwidth} 
\newlength{\ldprobmid}  \setlength{\ldprobmid}{0.12\textwidth} 
\newlength{\ldprobright}\setlength{\ldprobright}{0.78\textwidth}
\newcommand{\pmend}{\color{black}}
\title{Every finite nilpotent loop has a supernilpotent loop as reduct}
\date{\today}
\author{Michael Kompatscher}
\address{Department of Algebra, Charles University, Prague, Czech Republic}
\email{kompatscher@karlin.mff.cuni.cz}
\author{Peter Mayr}
\address{Department of Mathematics,
University of Colorado Boulder, USA}
\email{peter.mayr@colorado.edu}
\thanks{The first author has received funding from the Czech Science Foundation (GAČR grant no. 25-16324S), as well as research grants PRIMUS/24/SCI/008 and UNCE/24/SCI/022 of Charles University. The second author was supported by the National Science Foundation under Grant No. DMS 2452289.}
\begin{document}
\maketitle

\begin{abstract}
 A basic fact taught in undergraduate algebra courses is that every finite nilpotent group is a direct product of
 $p$-groups. Already Bruck~\cite{Br:CTL} observed that this does not generalize to loops.
 In particular, there exist nilpotent loops of size $6$ which are not direct products of loops of size $2$ and $3$.
 Still we show that every finite nilpotent loop $(A,\cdot)$ has a binary term operation $*$
 such that  $(A,*)$ is a direct product of nilpotent loops of prime power order, i.e., $(A,*)$ is supernilpotent. 
 As an application we obtain that every nilpotent loop of order $pq$ for primes $p,q$ has a finite basis for
 its equational theory.
\end{abstract}

\section{Background and results}

 We start by reviewing some concepts and notation of general algebra that we will need. 
 An \emph{algebra} $\A := (A,F)$ is a structure with universe $A$ and a list of basic operations $F$ on $A$.
 For $k\geq 1$, an operation $g\colon A^k\to A$ is a \emph{term operation} of $\A$ if $g$ is a composition of
 basic operations and projections of $\A$.
 More generally, $g\colon A^k\to A$ is a \emph{polynomial operation} of $\A$ if $g$ is a composition of
 basic operations, projections and constants of $\A$.
 An algebra $(A,G)$ is a \emph{reduct} of $\A$ if every operation in $G$ is a term operation of $\A$;
 it is a \emph{polynomial reduct} of $\A$ if every operation in $G$ is a polynomial operation of $\A$. 
 Two algebras on the same universe are \emph{term equivalent} if they have the same term functions.

 A \emph{loop} is an algebra $\A = (A,\cdot,\backslash,/,1)$ with binary operations $\cdot,\backslash,/$ and a constant
 $1$ satisfying
\[ x\backslash(xy) = y \quad x(x\backslash y) = y \quad (yx)/x = y \quad  (y/x)x = y \quad x\cdot 1 = 1\cdot x = x. \]
 The left division $\backslash$ and right division $/$ are uniquely determined by the multiplication
 $\cdot$ by these identities. 
 Hence equivalently a loop can be considered as a groupoid $(A,\cdot)$ with identity $1$ such that for all
 $a,b\in A$ the equations $ax=b$ and $ya = b$ have unique solutions $x,y\in A$.
 So for $a\in A$, the left and right translations
\[ L_a\colon A\to A, x \mapsto ax, \quad R_a\colon A\to A, x \mapsto xa, \]
 are bijections. They generate the \emph{multiplication group} $\Mlt(\A)$ of $\A$, that is,
\[  \Mlt(\A) := \langle L_a,R_a \st a\in A\rangle. \]
 In fact finite loops $(A,\cdot,\backslash,/,1)$ are always term equivalent to their reduct $(A,\cdot)$,
 which is why we will not distinguish between the two representations in the following. 
 Groups can be viewed as loops with associative multiplication.
 
 An algebra $\A$ is a \emph{Mal'cev algebra} if it has a ternary \emph{Mal'cev term} operation $m$,
 that is, $m$ satisfies $m(x,y,y) = x = m(y,y,x)$ for all $x,y\in A$.
 In particular, a group $(A,\cdot,^{-1},1)$ has a Mal'cev term $m(x,y,z) := xy^{-1}z$.
 A loop $(A,\cdot,\backslash,/,1)$
 has Mal'cev terms $(x/y)z$ and  $x(y\backslash z)$.

 We refer to~\cite{FM:CTC} for the definition and basic properties of the (term condition) commutator $[\alpha,\beta]$
 of congruences $\alpha,\beta$ for an algebra $\A$. This generalizes the classical commutator of normal subgroups in
 groups and allows us, for instance, to talk about abelian, nilpotent and solvable algebras.
 Let $0_A,1_A$ be the trivial congruence (equality) and the total congruence on $\A$, respectively.
 Then $\A$ is $k$-\emph{nilpotent} if there exist a series of congruences
\[ 1_A = \alpha_0 \geq \alpha_1 \geq \dots\geq \alpha_k = 0_A \]
 on $\A$ such that
\[ [\alpha_i,1_A] \leq \alpha_{i+1} \text{ for all } i < k. \]
 As for groups we call such a series of congruences \emph{central}.
 We call $\A$ \emph{nilpotent} if it is $k$-nilpotent for some $k\geq 1$;
 we call $\A$ \emph{abelian} if it is $1$-nilpotent.

 This notion specializes to classical nilpotence for groups (see the discussion in~\cite{FM:CTC})
 and to central nilpotence for loops in the sense of Bruck~\cite{Br:CTL}
 (see Stanovsk\'y and Vojt{\v e}chovsk\'y~\cite{SV:CTL} for a proof).

 Generalizing the binary commutator mentioned above, Bulatov has proposed $k$-ary \emph{higher commutators} in~\cite{Bu:NFM},
 which Aichinger and Mudrinski have used to define another notion of nilpotence in~\cite{AM:SAHC}.
 We refer to their paper and to Moorhead's~\cite{Mo:HCT} for the definition and basic properties of higher commutators.
 An algebra $\A$ is $k$-\emph{supernilpotent} if the $k+1$-ary higher commutator of the total congruence $1_A$ on $\A$
 is the trivial congruence $0_A$, i.e., 
\[ [\underbrace{1_A,\dots,1_A}_{k+1 \text{ times}}] = 0_A. \]
 We call $\A$ \emph{supernilpotent} if it is $k$-supernilpotent for some $k\geq 1$.
 By definition $1$-supernilpotent, $1$-nilpotent and abelian are equivalent for every algebra.

 For groups nilpotence and supernilpotence turn out to be the same but in general they are different.
 A finite Mal'cev algebra $\A$ of finite type is supernilpotent iff it is a direct product of nilpotent algebras
 of prime power cardinality~\cite{AM:SAHC}. In particular a finite loop is supernilpotent iff it is a direct
 product of nilpotent loops of prime power order.
 Furthermore for all Mal'cev algebras $\A$ and all $k\in \N$ the following are equivalent:
\begin{enumerate}   
\item $\A$ is $k$-supernilpotent;
\item  
 $\A$ is $k$-nilpotent and all \emph{commutator terms} on $\A$,
 i.e., term functions satisfying $c(x_1,\dots,x_n,z) = z$ whenever
 $x_i=z$ for some $i\leq n$, have essential arity at most $k+1$.
\end{enumerate}
 Aichinger and Mudrinski prove this equivalence for commutator polynomials
 in~\cite[Corollary 6.15, Lemma 7.5]{AM:SAHC}. But it is not hard to show from known facts that
 on a nilpotent algebra all commutator polynomials have essential arity at most $k+1$ iff all
 commutator terms have essential arity at most $k+1$. The facts needed are:
\begin{itemize}
\item
 Every polynomial $p(x_1,\dots,x_k)$ on $\A$ is of the form $t(x_1,\dots,x_k,a_1,\dots,a_\ell)$ for
 some term function $t$ on $\A$ and constants $a_1,\dots,a_\ell\in A$;
\item
 Every term function $t$ on a nilpotent algebra $\A$ can be represented as ``sum'' of
 commutator terms~\cite[Lemma 2.7]{BM:SPD} (note that although this result states the assumption that $\A$ is finite,
 its proof does not use it).
\end{itemize}  

 Long before the notion of (higher) commutators of congruences existed, Bruck~\cite{Br:CTL} observed that
 there exist (centrally) nilpotent loops of size $6$ which are not direct products of loops of size $2$ and $3$.
 Hence nilpotence does not imply supernilpotence for loops.
 He also showed that for any prime $p$, a loop $\A$ is nilpotent and has $p$-power order iff its multiplication group
 $\Mlt(\A)$ has $p$-power order \cite[Lemma 2.2 of Section VI.2]{Br:SBS}. 
 Wright~\cite{Wr:MGL} generalized this to the result that a ﬁnite loop $\A$
 is supernilpotent iff $\Mlt(\A)$ is nilpotent.
 Recently Seminani{\v s}iniov{\' a} and Stanovsk{\' y} showed that if a loop is $k$-supernilpotent,
 then its multiplication group is $k$-nilpotent~\cite[Theorem 1.1]{SS:TCN}.
 For infinite loops the converse is still open.
 We add that $2$-supernilpotent loops are in fact $2$-nilpotent groups~\cite[Proposition 5.1]{SV:SG3}.

 Unfortunately the relation between nilpotence and supernilpotence for algebras that are not Mal'cev
 is more complicated than their names suggest.
 Kearnes and Szendrei showed that every finite supernilpotent algebra is nilpotent~\cite{KS:ISSN}
 but Moore and Moorhead gave an example of an infinite supernilpotent algebra that is not nilpotent,
 in fact not even solvable~\cite{MM:SNN}.
 
 We can now state our main result in a concise way. 
 
\begin{thm}\label{thm:loop}
 Every finite nilpotent loop has a supernilpotent loop reduct.
\end{thm}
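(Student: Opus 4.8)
The plan is to induct on $|A|$ and to reduce everything to splitting off coprime factors. Write $|A| = \prod_{i=1}^{r} p_i^{a_i}$ with distinct primes $p_i$. If $r = 1$, i.e. $\A$ has prime power order, there is nothing to prove: by Bruck \cite{Br:SBS} a nilpotent loop of $p$-power order has a $p$-group as multiplication group, so $\Mlt(\A)$ is nilpotent, and hence $\A$ is already supernilpotent by Wright \cite{Wr:MGL}; take $* := \cdot$. So assume $r \geq 2$ and that the theorem holds for all loops of smaller order.

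The key reduction is a \emph{splitting claim}: whenever $|A| = n'n''$ with $n',n'' > 1$ and $\gcd(n',n'') = 1$, the loop $\A$ has a loop reduct $(A,*)$ isomorphic to a direct product $\A' \times \A''$ of nilpotent loops of orders $n'$ and $n''$. Granting this, I finish as follows. By the induction hypothesis $\A'$ and $\A''$ have supernilpotent loop reducts, given by binary terms $\odot'$ of $\A'$ and $\odot''$ of $\A''$. Since loops are Mal'cev, $(A,*)$ is congruence permutable, and because $|A'|,|A''|$ are coprime, a Fleischer-type argument (a subdirect product of two algebras of coprime order in a congruence permutable variety is the full product) shows that every subalgebra of a power of $\A'\times\A''$ factors as a product of a subalgebra of the corresponding power of $\A'$ with one of $\A''$. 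Hence the coordinatewise operation $\odot := (\odot',\odot'')$ preserves every invariant relation of $\A'\times\A''$, so by the Galois connection between term operations and invariant relations on a finite algebra of finite type it is itself a term operation of $(A,*)$, and thus of $\A$. Then $(A,\odot) \cong (A',\odot') \times (A'',\odot'')$ is a direct product of supernilpotent loops, hence supernilpotent; compare \cite{AM:SAHC}.

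To establish the splitting claim I would descend a central series of $\A$. The abelianization $\A/[1_A,1_A]$ is a finite abelian group, so it is the internal direct product of its $n'$- and $n''$-primary parts; pulling these back gives congruences $\beta',\beta''$ of $\A$ (hence of every reduct) with $\beta' \vee \beta'' = 1_A$ and $\beta' \wedge \beta'' = [1_A,1_A]$, which splits the top factor coprimely. It then remains to refine $\beta' \wedge \beta'' = [1_A,1_A]$ down to $0_A$ by passing to a reduct, which I would do one central step at a time. Let $\zeta$ be a nonzero central congruence, so that $\A$ is a central extension of the smaller nilpotent loop $Q := \A/\zeta$ by the abelian group $N$ carried by a $\zeta$-class, and let $N = N' \oplus N''$ be the splitting of $N$ into its $n'$- and $n''$-primary parts. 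Inductively $Q$ already admits a coprime product reduct; the task is to lift this splitting through the extension, i.e. to choose a loop reduct of $\A$ in which the $n'$-data and the $n''$-data each close up into a subloop, of coprime orders.

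Up to the loop analogue of a factor set, the extension is governed by a cocycle $f \colon Q \times Q \to N$, and the lifted splitting exists exactly when $f$ can be made ``block diagonal'' with respect to the coprime decompositions of $Q$ and of $N$. The cross components of $f$ -- those feeding the $n'$-part of $Q$ into $N''$ or conversely -- are coboundaries by a coprimality (averaging) argument, and so can be removed by reselecting the section, which is precisely what turns $\cdot$ into a reduct operation. \textbf{This is where the main obstacle lies:} a reselection of section is \emph{a priori} a translation by constants and therefore only yields a \emph{polynomial} reduct, whereas we must stay within \emph{term} operations. To do so I would use that on a nilpotent loop every term, and in particular the multiplication, is a ``sum'' of commutator terms \cite{BM:SPD}; the commutator-term summands take values in iterated centers, each of which splits over the primes, so coprimality lets me redistribute each summand onto its correct primary part by choosing integer exponents via the Chinese remainder theorem, without ever introducing a constant. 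Checking that the operation reassembled in this way is again a loop multiplication, and that the bookkeeping stays consistent along the whole central series, is the crux of the argument.
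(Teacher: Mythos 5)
The outer architecture of your argument (induct on $|A|$; split $\A$ as a term-reduct direct product of coprime-order nilpotent factors; then recombine supernilpotent reducts of the factors via Fleischer's lemma and the $\mathrm{Pol}$--$\mathrm{Inv}$ Galois connection) is a reasonable alternative to the paper's single induction down a central series, and the recombination step can be made to work (you would need to note that Lagrange's theorem holds for finite nilpotent loops so that the common quotient in Fleischer's lemma is trivial). But the entire weight of the theorem rests on your \emph{splitting claim}, and there you correctly locate the obstacle --- killing the cross components of the cocycle by re-choosing a section is a modification by constants, hence only a polynomial reduct --- without actually overcoming it. The proposed repair (write the multiplication as a sum of commutator terms and redistribute summands onto primary parts by Chinese-remainder exponents) is not carried out, and as stated it is not clear it even produces a binary \emph{term} that is again a loop multiplication; you concede as much in your last sentence. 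So the proof has a genuine gap at its crux.

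For comparison, the paper's device at this step is concrete and entirely constant-free. Working modulo $C_{i+1}$, with $E$ the maximal $p$-part and $V$ the coprime part of the current quotient, one chooses $n$ by the Chinese remainder theorem so that $n\equiv 1$ modulo $|\Mlt(\E)|$ (a $p$-group) and $n\equiv 0$ modulo $|\Mlt(\V)|$; then the left-associated power map $h(\bar x)=\bar x^{\,n}$ acts as the identity on $E$ and kills $V$, i.e., it is a ``projection onto $E$'' built from a term. Its failure to be a homomorphism is measured by the term $r(x,y)=(xy)^{n}/(x^{n}y^{n})$, which takes values in the central layer $C_i/C_{i+1}$, and the corrected multiplication $x*y=(xy)/r(x,y)$ is then shown (using the term condition for centrality) to be a loop operation whose quotient splits as $\E\times\V$. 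This is exactly the ``term, not polynomial'' correction your sketch is missing; without it, or some equally explicit substitute, your argument does not close.
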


 Theorem~\ref{thm:loop} is proved in Section~\ref{sec:proofs}.
 It gives a positive answer to~\cite[Question 1.5]{Ma:VLN} in the case of loops.
 We discuss some applications to equational theories of nilpotent loops in Section~\ref{sec:applications}.
 In Section~\ref{sec:groups} we show that Theorem~\ref{thm:loop} can neither be extended to arbitrary
 loops nor sharpened to always obtain an abelian reduct.
 
 Every nilpotent loop of order a prime or $4$ is in fact an abelian group
 (however for any odd prime $p$ there exist nilpotent loops of order $p^2$ that are not groups~\cite{DV:ENL}).
 Hence Theorem~\ref{thm:loop} yields in particular that every nilpotent loop
 of order $2^en$ for $e\leq 2$, $n$ odd and squarefree, has an abelian group as reduct.

 Conversely every finite nilpotent loop can be obtained by expanding a supernilpotent loop by some binary
 operation.

 A subloop $N$ of a loop $\A = (A,\cdot)$ is \emph{normal} if for all $a,b\in A$
\[ (ab)N = a(bN) = a(Nb). \]
 If $N$ is normal in $\A$, then the set of left cosets $A/N$ forms a quotient loop $\A/N$ with multiplication 
\[ (aN) \cdot (bN) := (ab)N \]
 for $a,b\in A$.
 
\begin{cor} \label{cor:loop}
 Let $\A = (A,\cdot)$ be a finite nilpotent loop with normal subloop $N$ such that $\A/N$ is supernilpotent. 
 Then $\A$ is term equivalent to an algebra $(A,*,r)$ such that $(A,*)$ is a supernilpotent loop with $(A,*)/N = \A/N$
 and $r$ is a binary operation $r\colon A^2\to N$.
\end{cor}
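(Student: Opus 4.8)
The plan is to reduce the corollary to the construction of a single binary term operation and then to extract that operation from the proof of Theorem~\ref{thm:loop}. Let $\eta\in\Con(\A)$ be the congruence whose classes are the cosets of $N$, so that $\A/\eta=\A/N$ is supernilpotent by hypothesis. I claim it suffices to produce a binary term operation $*$ of $\A$ such that $(A,*)$ is a supernilpotent loop with identity $1$ and $a*b$ and $a\cdot b$ are $\eta$-related for all $a,b\in A$. The last condition says precisely that $\eta$ remains a congruence of $(A,*)$ and that $\overline{a}*'\overline{b}=\overline{a*b}=\overline{a\cdot b}=\overline{a}\cdot'\overline{b}$ in the quotient, i.e.\ $(A,*)/N=\A/N$.

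Given such a $*$, I would define $r$ as follows. Since $(A,*)$ is a finite loop, its left division $\backslash_*$ is again a term operation of $(A,*)$, hence of $\A$; set
\[ r(x,y) := (x*y)\backslash_*(x\cdot y). \]
Then $r$ is a term operation of $\A$. For each $a,b$ the elements $a*b$ and $a\cdot b$ lie in the same $\eta$-class, so in the loop $(A,*)/N$ we get $\overline{a*b}\cdot\overline{r(a,b)}=\overline{a\cdot b}=\overline{a*b}$, which forces $r(a,b)$ into the identity coset $N$; thus $r\colon A^2\to N$. Moreover $x\cdot y=(x*y)*r(x,y)$ by definition of $\backslash_*$, so $\cdot$ is a term operation of $(A,*,r)$, while $*$ and $r$ are term operations of $\A$. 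Hence $(A,\cdot)$ and $(A,*,r)$ are term equivalent, and the stated properties of $(A,*)$ hold by construction. This part is pure bookkeeping once $*$ is available.

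The substantive step is producing $*$, and here I would revisit the construction proving Theorem~\ref{thm:loop} rather than invoke only its statement. That construction converts the nilpotent multiplication $\cdot$ into a supernilpotent one by repairing exactly those commutator term operations whose essential arity is too large (recall the equivalence that a nilpotent Mal'cev algebra is $k$-supernilpotent iff all its commutator terms have essential arity at most $k+1$). The idea is to run this repair \emph{relative to} $\eta$: because $\A/N=\A/\eta$ is already supernilpotent, its commutator terms have bounded essential arity, so writing $\cdot$ as a ``sum'' of commutator terms on the nilpotent algebra $\A$ (as in \cite[Lemma~2.7]{BM:SPD}), the obstructing high-arity contributions must reduce to low essential arity modulo $\eta$ and hence differ from trivial only by values in $N$. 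Flattening or deleting these contributions should yield $*$ with $a*b$ and $a\cdot b$ always $\eta$-related.

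The main obstacle I expect is twofold. First, one must verify that the modified operation $*$ is still a loop operation with identity $1$: the corrections must be by elements of $N$ in a manner compatible with the Mal'cev/loop structure so that unique left and right divisibility survive. Second, one must check that $(A,*)$ is genuinely supernilpotent and not merely ``supernilpotent modulo $N$'': this needs the commutator bookkeeping tying together the supernilpotence of $\A/N$, the bounded essential arity of its commutator terms, and the nilpotence of $\A$ in order to conclude that \emph{all} commutator terms of $(A,*)$ have bounded essential arity. Showing that corrections confined to $N$ actually purchase global supernilpotence of $(A,*)$ is the heart of the matter and is exactly the place where the detailed analysis behind Theorem~\ref{thm:loop} is required.
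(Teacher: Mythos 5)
The first half of your argument --- reducing the corollary to the existence of a binary term operation $*$ of $\A$ such that $(A,*)$ is a supernilpotent loop and $a*b\equiv a\cdot b \pmod N$ for all $a,b$, and then recovering $\cdot$ via $r(x,y):=(x*y)\backslash_*(x\cdot y)$ with range in $N$ --- is correct and matches the paper's bookkeeping (the paper uses the right division of $(A,*_k)$ rather than the left one, which is immaterial). The genuine gap is in the substantive half: you never actually construct $*$. Your sketch rests on a mischaracterization of how Theorem~\ref{thm:loop} is proved: the construction in Lemma~\ref{lem:loop} does not proceed by ``repairing commutator terms of too-large essential arity''. It proceeds by induction along a central series $A=C_0>\dots>C_k=1$ with prime-power factors, at each step twisting the multiplication by a correction $r(x,y)=(xy)^{n}/(x^{n}y^{n})$, built from a left-associated $n$-th power map and taking values in $C_i$ modulo $C_{i+1}$, so that the quotient modulo $C_{i+1}$ acquires a coprime direct decomposition $\E\times\V$. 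The essential-arity characterization of supernilpotence is only background in the paper and plays no role in the construction. Your proposed ``flattening or deleting'' of high-arity contributions is not an argument: you yourself flag that both the loop axioms for $*$ and the genuine (not merely mod-$N$) supernilpotence of $(A,*)$ remain unverified, and those are exactly the points where your sketch gives no traction; it is far from clear that deleting commutator-term contributions from a binary operation yields a loop operation at all.

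The missing idea is much simpler than a ``relative repair''. Choose the central series so that $N$ is one of its terms, say $C_j=N$ (always possible: pull back a central series of $\A/N$, continue below $N$ by intersecting with the lower central series of $\A$, and refine to prime-power factors). Then start the induction of Lemma~\ref{lem:loop} at stage $j$ with $\A_j:=\A$; the induction hypothesis needed there is precisely that $\A_j/C_j=\A/N$ be supernilpotent, which is your standing assumption. Every subsequent correction term takes values in $C_{i+1}\leq C_j=N$, so all the operations $*_i$ for $i\geq j$ agree with $\cdot$ modulo $N$ automatically, and $*:=*_k$ has exactly the properties your reduction requires. No new commutator analysis is needed; the entire content of the corollary is this choice of central series plus the observation that the corrections in Lemma~\ref{lem:loop} stay inside $N$ from stage $j$ onward.
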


 Corollary~\ref{cor:loop} is also proved in Section~\ref{sec:proofs}.
 
 Freese and McKenzie observed that every nilpotent Mal'cev algebra $\A$ has polynomial functions
 $\cdot,\backslash,/$
 such that $(A,\cdot,\backslash,/,1)$ is a nilpotent loop (see the discussion after Corollary 7.7 in~\cite{FM:CTC}).
 Hence Theorem~\ref{thm:loop} immediately yields the following.

\begin{cor} \label{cor:Mal'cev}
 Every finite nilpotent Mal'cev algebra has a supernilpotent Mal'cev algebra as polynomial reduct. 
\end{cor}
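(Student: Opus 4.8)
The plan is to combine Theorem~\ref{thm:loop} with the observation of Freese and McKenzie recalled just above the statement, the only extra ingredient being the elementary fact that a term operation of a polynomial reduct is again a polynomial operation of the ambient algebra. So let $\A = (A,F)$ be a finite nilpotent Mal'cev algebra. First I would invoke the Freese--McKenzie observation to obtain polynomial operations $\cdot,\backslash,/$ and an element $1\in A$ of $\A$ for which $\L := (A,\cdot,\backslash,/,1)$ is a nilpotent loop. By construction each basic operation of $\L$ is a polynomial operation of $\A$, so $\L$ is a polynomial reduct of $\A$; being a loop it is in particular a finite nilpotent Mal'cev algebra (with Mal'cev term $(x/y)z$, as noted in the excerpt).

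Next I would apply Theorem~\ref{thm:loop} to the finite nilpotent loop $\L$, obtaining a binary term operation $*$ of $\L$ such that $\L^* := (A,*)$ is a supernilpotent loop. Since $\L^*$ is a finite loop, it is term equivalent to its full loop expansion $(A,*,\backslash_*,/_*,1_*)$; hence its divisions, its identity element, and a Mal'cev term of $\L^*$ are all term operations of $\L^*$, and therefore term operations of $\L$.

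It remains to transfer these operations back to $\A$. Because $\L$ is a polynomial reduct of $\A$, every term operation of $\L$---being a composition of basic operations of $\L$ with projections---is a composition of polynomial operations of $\A$ with projections, and so is itself a polynomial operation of $\A$. In particular $*$, its associated divisions, the constant $1_*$, and a Mal'cev term of $\L^*$ are all polynomial operations of $\A$, so $\L^*$ is a supernilpotent Mal'cev algebra that is a polynomial reduct of $\A$, as desired. I do not anticipate a genuine obstacle here, since the mathematical content is carried entirely by Theorem~\ref{thm:loop} and the Freese--McKenzie representation; the only point requiring care is the bookkeeping in this last step---checking that term operations of a polynomial reduct are polynomial operations of $\A$, and that the finiteness of $\L^*$ makes its divisions and a Mal'cev term available as term operations, so that the resulting supernilpotent loop genuinely is a Mal'cev polynomial reduct of $\A$.
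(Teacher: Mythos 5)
Your proposal is correct and follows exactly the route the paper intends: apply the Freese--McKenzie observation to obtain a nilpotent loop as polynomial reduct, then apply Theorem~\ref{thm:loop} to that loop, noting that term operations of a polynomial reduct are polynomial operations of the original algebra. The paper leaves these bookkeeping details implicit (it simply states that the corollary follows ``immediately''), but your elaboration of them is accurate.
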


 The following questions remain open:
\begin{enumerate}
\item
 Is finiteness necessary for Theorem~\ref{thm:loop}, i.e.,
 does every nilpotent loop have a supernilpotent loop reduct?
\item \cite[Question 1.5]{Ma:VLN}
 Can  Corollary~\ref{cor:Mal'cev} be sharpened by replacing ``polynomial reduct'' by ``reduct'', i.e.,
 does every (finite) nilpotent Mal'cev algebra have a supernilpotent Mal'cev reduct?
\end{enumerate}

\section{Proofs of the main results} \label{sec:proofs}

 Let $\A = (A,\cdot,\backslash,/,1)$ be a loop, let $\alpha$ be a congruence of $\A$. 
 Then $\alpha$ is uniquely determined by the normal subloop $N := 1/\alpha$ (the class of $1$) of $\A$,
 more precisely
\[ \alpha = \{ (x,y) \in A^2 \st x/y \in N \} =:\ \equiv_N. \] 
 Conversely, every normal subloop $N$ of $\A$ induces a congruence $\equiv_N$ as above whose classes are just
 the left (or equivalently right) cosets of $N$ in $\A$.

 Because of this correspondence, it is customary in loop theory to denote quotients $\A/\alpha$ by $\A/N$
 for the corresponding normal subloop $N$. It also allows us to translate commutators of congruences $\alpha,\beta$ of
 $\A$ into commutators of normal subloops via
\[ [1/\alpha,1/\beta] := 1/[\alpha,\beta]. \] 
 So $\A$ is nilpotent iff it has a central series of normal subloops
\[ A  = C_0 \geq C_1 \geq \dots \geq C_k = 1 \]
 such that $[C_i,A] \leq C_{i+1}$ for $i<k$.
 It is not hard to see that whenever such a central series exists for some finite $\A$,
 it can be refined to one where all consecutive quotients $C_i/C_{i+1}$ have prime power order.
 
 Our main result will follow readily from the next lemma.

\begin{lem} \label{lem:loop}
 Let $\A$ be a finite nilpotent loop with a central series of normal subloops
\[ A  = C_0 > C_1 > \dots > C_k = 1 \]
 and factors $C_i/C_{i+1}$ of prime power order for $i<k$.

 Then for every $i\leq k$ there exists a binary term operation $*_i$ on $\A$ such that $\A_i := (A,*_i)$ is a loop and
 $\A_i/C_i$ is supernilpotent.
\end{lem}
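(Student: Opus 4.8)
The plan is to prove the lemma by induction on $i$, building up the supernilpotent quotient one central factor at a time. For the base case $i=0$ we have $C_0=A$, so $\A_0/C_0$ is the one-element loop and taking $*_0$ to be the original multiplication $\cdot$ works. Throughout the induction I would maintain the invariant that $1$ is the identity element of each $\A_i=(A,*_i)$ and that $*_i$ is a term operation of $\A$. Since every term operation of $\A_i$ is a term operation of $\A$, every congruence of $\A$ remains a congruence of $\A_i$; combined with the invariant that $1$ is the identity of $\A_i$, this guarantees that each $C_j$ is still a normal subloop of $\A_i$, and, because commutators can only shrink when passing to a reduct, that the series $C_0>\dots>C_k$ is still central in $\A_i$. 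Thus $\A_i$ is again a nilpotent loop with the same central series, and the only real task is the inductive step $i\to i+1$.

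For the inductive step, assume $\A_i/C_i$ is supernilpotent and let $p$ be the prime with $|C_i/C_{i+1}|=p^m$. By the characterization of finite supernilpotent Mal'cev algebras as direct products of nilpotent algebras of prime power order~\cite{AM:SAHC}, I may write $\A_i/C_i=\P\times\mathbf{Q}$, where $\P$ collects the factors of $p$-power order and $\mathbf{Q}$ the factors of order coprime to $p$. I will construct $*_{i+1}$ so that $*_{i+1}\equiv *_i\pmod{C_i}$; this forces $\A_{i+1}/C_i=\A_i/C_i$, so that supernilpotence of the smaller quotient is preserved for free, and it remains only to control the central extension
\[ C_i/C_{i+1}\ \longrightarrow\ \A_{i+1}/C_{i+1}\ \longrightarrow\ \A_{i+1}/C_i=\P\times\mathbf{Q}. \]
This extension is central because the series is central, and its kernel $C_i/C_{i+1}$ is a $p$-group. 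My goal is to choose $*_{i+1}$ so that the coprime factor $\mathbf{Q}$ splits off unchanged as a direct factor, that is, $\A_{i+1}/C_{i+1}\cong\P'\times\mathbf{Q}$ with $\P'$ of $p$-power order. Since $\mathbf{Q}$ is itself a direct product of some of the prime power factors of $\A_i/C_i$, such a decomposition exhibits $\A_{i+1}/C_{i+1}$ as a direct product of nilpotent loops of prime power order, hence as supernilpotent.

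The operation $*_{i+1}$ should be obtained from $*_i$ by a central correction $x*_{i+1}y := (x*_iy)\cdot d(x,y)$, where $d\colon A^2\to C_i$ takes values in $C_i$ (so that nothing changes modulo $C_i$) but is relevant only modulo $C_{i+1}$, and vanishes whenever $x=1$ or $y=1$ (so that $1$ remains a two-sided identity and $*_{i+1}$ is again a loop operation). The function $d$ is meant to absorb the dependence of the extension on the coprime part $\mathbf{Q}$: since $|\mathbf{Q}|$ is invertible modulo $p$, multiplication by $|\mathbf{Q}|$ is an automorphism of the $p$-group $C_i/C_{i+1}$, and averaging the extension data over the $\mathbf{Q}$-coordinates should remove all $\mathbf{Q}$-dependence, a loop-theoretic analogue of the vanishing of coprime group cohomology. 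Crucially, $d$ must be realized as a \emph{commutator term} of $\A$, so that $*_{i+1}$ is genuinely a term operation; here I would use the representation of term functions on a nilpotent algebra as sums of commutator terms~\cite[Lemma 2.7]{BM:SPD}.

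The main obstacle is the construction and verification of the correction $d$. Three things must be checked at once: that $d$ can be expressed as an honest commutator term of $\A$ rather than merely as a set-theoretic averaging, so that $\A_{i+1}$ stays a reduct of $\A$; that the corrected operation $*_{i+1}$ still satisfies the loop axioms; and that after the correction the coprime factor $\mathbf{Q}$ really is a direct factor of $\A_{i+1}/C_{i+1}$. The last point is the delicate one, because $\A_i/C_i$ is only assumed supernilpotent, not abelian, so the ``cocycle'' governing the extension is not a bilinear form but may involve the full higher commutator structure of the base, and the coprime averaging must be carried out compatibly with that structure. I expect the bookkeeping of these higher commutator terms, rather than the coprimeness argument itself, to be the technical heart of the proof.
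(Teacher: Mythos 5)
Your high-level strategy matches the paper's: induct on $i$, keep the quotient $\A_i/C_i$ fixed, and modify $*_i$ by a central correction with values in $C_i$ so that in $\A_{i+1}/C_{i+1}$ the coprime part splits off as a direct factor. But there is a genuine gap at exactly the point you flag as ``the main obstacle'': you never produce the correction $d$ as a term operation, and the averaging device you propose cannot be one. Averaging extension data over the $\mathbf{Q}$-coordinates is a set-theoretic construction (a sum over all elements of a coset), and \cite[Lemma 2.7]{BM:SPD} does not help here --- it decomposes functions that are \emph{already} terms into sums of commutator terms; it does not let you realize an arbitrary centrally-valued function as a term. Since the entire content of the lemma is that $*_{i+1}$ is a \emph{term} operation of $\A$ (otherwise one could just invoke the coprime splitting at the level of abstract extensions), the proof is missing its key idea.

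The paper's solution is to replace averaging by a power map: choose $n$ with $n\equiv 1$ modulo $|\Mlt(\E)|$ and $n\equiv 0$ modulo $|\Mlt(\V)|$ (using Bruck's theorem that the multiplication group of a nilpotent $q$-loop is a $q$-group), so that the left-associated power $h(\bar x)=\bar x^{\,n}$ is a \emph{term-definable} projection of $\bar\A_i=\A_i/C_{i+1}$ onto its $p$-part $E$. The failure of $h$ to be a homomorphism is then measured by the term $r(x,y):=(xy)^n/(x^ny^n)$, which takes values in $C_i$ modulo $C_{i+1}$, and the new operation is $x*_{i+1}y:=(xy)/r(x,y)$. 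One then checks that $h$ becomes a homomorphism for $*_{i+1}$, giving the coprime quotients $\E$ and $\V$ and hence the direct decomposition. A second omission in your sketch: vanishing of $d$ at $x=1$ or $y=1$ only gives that $1$ is a two-sided identity; unique solvability of $a*_{i+1}x=b$ and $y*_{i+1}a=b$ still has to be proved, which the paper does by a second induction along the series using the term-condition characterization of centrality of $C_j/C_{j+1}$. Neither step is routine bookkeeping, so as it stands the proposal records the right target but not a proof.
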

 
\begin{proof}
 Let us start with a comment on notation.
 Recall that for any $j\leq k$, the normal subloop $C_j$ induces a congruence $\alpha_j$ on the loop $\A$.
 Clearly $\alpha_j$ is still a congruence for any reduct $\A_i$ of $\A$ that we will construct in this proof.
 Further $\A_i/\alpha_j$ is a reduct of $\A/\alpha_j = \A/C_j$ and has the same universe $A/\alpha_j = A/C_j$.
 Hence we will denote $\A_i/\alpha_j$ by $\A_i/C_j$ for simplicity throughout this proof.
 Since all our reducts $\A_i$ will turn out to be loops with normal subloops $C_j$,
 this is actually consistent with the usual notation in loops.

 We will prove the lemma by induction on $i$. For $i=1$, let $*_1$ be the multiplication of $\A = (A,\cdot)$.
 Then $\A_1 = \A$ and $\A/C_1$ is abelian of prime power order by assumption, in particular, $1$-supernilpotent.

 Next let $i\geq 1$ and assume $\A_i = (A,*_i)$ is a loop reduct of $\A$ such that $\A_i/C_i$ is supernilpotent for
 $i\geq 1$.
 If not denoted otherwise, all loop operations are those from $\A_i$ in the following. 
 
 Let $|C_i/C_{i+1}|$ be a power of a prime $p$. Since $\A_i/C_i$ is supernilpotent by induction assumption,
 it is a direct product of loops of prime power order. In particular $\A_i/C_i \cong \P \times \V$ where $|P|$
 is a power of $p$ and $|V|$ is coprime to $p$.
 By the Homomorphism Theorem $\A_i/C_{i+1}$ has a unique maximal $p$-power order subloop $\E$,
 which is the extension of $C_i/C_{i+1}$ by $\P$.
 Our goal is to define a new loop reduct $\A_{i+1}$ such that
\begin{equation} \label{eq:Ai+1}
 \A_{i+1}/C_{i+1} \cong \E\times\V.
\end{equation}
 Since the latter is supernilpotent as a direct product of nilpotent loops of prime power order,
 this will establish the induction step and hence the lemma.

 First note that there exists $n\geq 1$ such that
\[ x^n = x \text{ for all } x\in E \text{ and } x^n = 1 \text{ for all } x\in V. \] 
 Here $x^n := (..((\underbrace{xx)x) \dots)x}_{n}$ denotes the left associated $n$-th power of $x$ with respect to $*_i$.
 The natural number $n$ as above exists since for any nilpotent loop of order a power of a prime $q$,
 the group of right translations is a $q$-group \cite[Lemma 2.2 of Section VI.2]{Br:SBS}. 
 So we can choose $n$ to be congruent to $1$ modulo the order of the multiplication group of $\E$ (which is a $p$-group)
 and $n\equiv 0$ modulo the order of the multiplication group of $\V$ (which is a direct product of $q$-groups, for primes $q \neq p$).

 We write $\bar{x}:= xC_{i+1}$ for $x\in A$, $\bar{A} := A/C_{i+1}$ and $C := C_i/C_{i+1}$. Let
\[ h\colon \bar{A}\to\bar{A},\ \bar{x}\mapsto\bar{x}^n. \]
 Our choice of $n$ yields that $h$ modulo $C$ just induces the projection homomorphism of $\bar{\A}/C \cong \P \times \V$ onto $\P$.
 In particular $h(\bar{A}) \subseteq E$ and also $h|_E = \id_E$. Hence $h$ behaves like a projection of $\bar{A}$ onto $E$
 but it need not be a homomorphism of $\bar{\A}_i = (A/C_{i+1},*_i)$.
 For $x,y\in A$ define
\[ r(x,y) := (xy)^{n} / (x^{n}y^{n}). \]
Modulo $C_{i+1}$ this term is equal to the quotient of $h(\overline{xy})$ and $h(\bar{x})h(\bar{y})$,   i.e.
\[ h(\overline{xy}) = \overline{r(x,y)}\, \bigl(h(\bar{x})h(\bar{y})\bigr) \text{ for all } x,y\in A. \]
 Since $h$ modulo $C$ is a homomorphism,
\[ \overline{r(x,y)} \in C \text{ for all } x,y\in A. \]
 Now define $\A_{i+1} := (A,*_{i+1})$ with
 \[ x*_{i+1} y := (xy)/r(x,y). \]
%
 We write $\bar{\A}_{i+1}$ for the reduct of $\bar{\A}$ with universe $A/C_i$ and operation induced by $*_{i+1}$.
 Since $r$ clearly vanishes on $E$ and on $V$ as well, the new $*_{i+1}$ and the old $*_i$ induce the same operations
 on $E$ and on $V$.
 In particular $\bar{\A}_i/E \cong \V \cong \bar{\A}_{i+1}/E$.
 To see that $\bar{\A}_{i+1}$ also has a quotient isomorphic to $\E$,
 we check that $h\colon \bar{\A}_{i+1} \to \E$ is a homomorphism. 
 For that let $x,y\in A$ and consider
\begin{align*}
  h(\bar{x}*_{i+1} \bar{y}) & = \left(\overline{xy}/\overline{r(x,y)}\right)^n \\
                            &= \overline{xy}^n/\overline{r(x,y)}^n  & \text{by the centrality of }\overline{r(x,y)} \text{ in } \bar{\A}_i \\
                            &= \overline{xy}^n/\overline{r(x,y)}  & \text{by the definition of } n \text{ and } \overline{r(x,y)}\in E \\
                            &= h(\bar{x}) h(\bar{y}) & \text{by definition of } r(x,y) \\
 &= h(\bar{x}) *_{i+1} h(\bar{y}) & \text{since } *_i = *_{i+1} \text{ on } h(\bar{A}) = E.
\end{align*}  
 Since $\bar{\A}_{i+1}/\ker h \cong \E$, we see that $\bar{\A}_{i+1}$ has coprime quotients $\E$ and $\V$. 
 Hence~\eqref{eq:Ai+1} follows once we show that $\A_{i+1}$ is a loop. 

 For this we use a second induction on $j = i, \ldots, k$ to prove that
\begin{equation} \label{eq:AiCj}
  \A_{i+1}/C_j := (A/C_j,*_{i+1}) \text{ is a loop}.
\end{equation}
 The base case $j=i$ follows from our previous observation that $*_{i+1}$ and $*_i$ coincide on $E$ and on $V$.
 In particular $\A_{i+1}/C_i = \A_{i}/C_i$ is a loop.

 Next let $j\geq i$ and assume $\A_{i+1}/C_j$ is a loop. We claim that the right multiplication by any fixed $b\in A$,  
\begin{equation} \label{eq:bij}
 A/C_{j+1} \to A/C_{j+1},\ xC_{j+1}\mapsto (x*_{i+1} b) C_{j+1}, \text{ is a bijection}.
\end{equation}  
 For the proof consider $a,a'\in A$ with $a*_{i+1} b \equiv_{C_{j+1}} a'*_{i+1} b$.
 Since $\A_{i+1}/C_j$ is a loop by induction assumption, this yields $a \equiv_{C_j} a'$.
Hence we have $c\in C_j$ such that $a'=a\cdot c$ (using the original multiplication $\cdot$ of $\A$).

Define the term $p(x,y,z) := (x\cdot y) *_{i+1} z$ on $\A$.  By the assumption above we have
$$p(a,1,b) \equiv_{C_{j+1}} p(a,c,b).$$
Since $C_j/C_{j+1}$ is central in $\A/C_{j+1}$, the term condition characterization
of centrality \cite[Definition 3.2.]{FM:CTC} yields that
\[ 1 = p(1,1,1) \equiv_{C_{j+1}} p(1,c,1) = c. \]
In other words $c \in C_{j+1}$ and hence $a \equiv_{C_{j+1}} a'$.
Thus the map in~\eqref{eq:bij} is injective. Bijectivity follows from the finiteness of $A$.

 Similarly the left multiplication by any fixed element modulo $C_{j+1}$ is bijective.
 Thus~\eqref{eq:AiCj} is proved.
 For $j=k$ we obtain that $\A_{i+1}$ is a loop.

 Summing up, $\A_{i+1}/C_{i+1}$ is a loop of order $|E|\cdot |V|$ with coprime quotients $\E$ and $\V$. 
 Thus $\A_{i+1}/C_{i+1} \cong \E\times\V$ and~\eqref{eq:Ai+1} is proved.
 In particular $\A_{i+1}/C_{i+1}$ is supernilpotent. 
\end{proof}

\begin{proof}[Proof of Theorem~\ref{thm:loop}]
 For $i=k$ in Lemma~\ref{lem:loop} we see that $\A = (A,\cdot)$ has a supernilpotent loop reduct
 $\A_k = (A,*_k) \cong \A_k/1$.
\end{proof}

\begin{proof}[Proof of Corollary~\ref{cor:loop}]
 We may choose a central series of normal subloops of $\A = (A,\cdot)$,
\[ A  = C_0 > C_1 > \dots > C_k = 1, \]
 such that factors $C_i/C_{i+1}$ have prime power order for all $i<k$ and $C_j = N$ for some $j \leq k$.
 Since $\A/N$ is supernilpotent, we may start the inductive construction in the proof of Lemma~\ref{lem:loop}
 with $\A_j = \A$ instead of $\A_1 = \A$.
 This then yields $\A_i/N = \A/N$ for all $i\leq k$. Let $/_k$ denote the right division in $\A_k$, and let
\[ r\colon A^2\to A,\ (x,y)\mapsto (xy)/_k (x*_ky). \]
 Then the range of $r$ is contained in $N$. The loop $\A_k := (A,*_k)$ is supernilpotent
 and its expansion $(A,*_k,r)$ is term equivalent to $\A$.
\end{proof}

\section{An application to term functions and equational theories} \label{sec:applications}

 Vaughan-Lee~\cite{Va:NPV} showed that every finite supernilpotent loop has a finite basis of equations for its
 equational theory (see~\cite[Chapter 14]{FM:CTC} for a generalization to Mal'cev algebras).
 It remains open whether any finite nilpotent loop is finitely based.
 We hope that Theorem~\ref{thm:loop} can serve as a first step towards a possible positive answer.

 To illustrate this, we give an explicit description of term functions on finite central-by-supernilpotent
 (in particular $2$-nilpotent) loops.
 In~\cite{Ma:VLN} this approach was already used to show that a particular nilpotent (but not supernilpotent)
 loop of size 12 that was proposed by Vaughan-Lee in~\cite{Va:NPV} as potentially non-finitely based
 is in fact finitely based.
 In Theorem~\ref{thm:eqbasispq} we show that all nilpotent loops of size $pq$ for primes $p, q$ are finitely based.

 For an algebra $\A$, let $\Clo(\A)$ denote the set of term functions of $\A$.
 For algebras $\A,\B$ and a function $r\colon A^k\to B$, let $\langle r\rangle_{\A,\B}$ be the \emph{clonoid}
 from $\A$ to $\B$ generated by $r$, i.e., the smallest set of finitary functions from $A$ to $B$ that contains $r$
 and is closed under compositions with term functions of $\A$ on the domain and with term functions of $\B$
 on the codomain side (see~\cite{MW:CBM} for the definition and background on clonoids). 
 
\begin{lem} \label{lem:centralbysn}
 Let $\A = (A,\cdot)$ be a finite loop with identity $1$ and central normal subloop $N$ such that
 $\A/N$ is supernilpotent.
\begin{enumerate}
\item  \label{it:csn1} 
 Then $\A$ is term-equivalent to $(A,*,r)$ for some supernilpotent loop $(A,*)$
 with $\A/N = (A,*)/N$, $(N,\cdot) = (N,*)$ 
 and some $r\colon A^2\to N$ that is constant on cosets modulo $N$ and $r(1,1)=1$.
\item \label{it:csn2}
 $\Clo(\A) = \{ f*w \st f\in\Clo(A,*), w\in\langle r\rangle_{\A/N,(N,\cdot)} \}. $

 Here we abuse notation by identifying functions $r,w$ from $A$ to $N$ that are constant on cosets of
 $N$ with their induced functions from $A/N$ to $N$.
\end{enumerate}  
\end{lem}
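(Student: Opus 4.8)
The plan is to obtain the first assertion by refining the construction behind Corollary~\ref{cor:loop}, and then to read off the second assertion as a normal form for terms that is forced by the centrality of $N$. For~\ref{it:csn1}, I would run the construction of Lemma~\ref{lem:loop}: since $\A/N$ is supernilpotent (hence nilpotent) and $N$ is central, I can pick a central series $A=C_0>\dots>C_m=N>\dots>C_k=1$ with all factors of prime power order, freely refining the part below $N$, and start the induction at $\A_m=\A$ exactly as in the proof of Corollary~\ref{cor:loop}. This yields a supernilpotent loop $(A,*):=\A_k$ with $(A,*)/N=\A/N$ together with $r(x,y):=(xy)/_*(x*y)$, whose range lies in $N$, such that $(A,*,r)$ is term equivalent to $\A$; note $r(1,1)=1/_*1=1$. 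The two substantial points left are that $*$ agrees with $\cdot$ whenever an argument lies in $N$ (which gives $(N,\cdot)=(N,*)$) and that $r$ is constant on cosets of $N$; both rely on the centrality of $N$.

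For the first point, observe that $N$ stays central in every reduct $\A_i$ produced by the construction, since commutators do not increase under reducts and $[\equiv_N,1_A]_{\A}=0_A$. Hence the elements of $N$ associate and commute with all others, giving the bilinear identity $(xc)(yd)=(xy)(cd)$ for $c,d\in N$ in each $(A,*_i)$ and, in particular, $(zc)^n=z^nc^n$ for every $z$ and every exponent $n$. Thus each correction $r_i(z,c)=(z*_ic)^n/_i(z^n*_ic^n)$ of the construction equals $1$ as soon as $c$ lies in $N$ (and symmetrically in the left argument), so $*_{i+1}$ and $*_i$ coincide whenever an argument is in $N$; by induction $x*c=xc$ and $c*x=cx$ for all $x\in A$ and $c\in N$. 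In particular $(N,\cdot)=(N,*)$ and $r(z,c)=r(c,z)=1$ for $c\in N$.

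The main obstacle is the coset-constancy of $r$, and this is exactly where centrality is decisive. For $c,d\in N$ I would apply the bilinear identity to both products, $(xc)(yd)=(xy)(cd)$ and $(x*c)*(y*d)=(x*y)*(cd)$ with $cd\in N$; since $\cdot$ and $*$ agree on arguments from $N$ the elements $xc=x*c$ and $yd=y*d$ are unambiguous, the numerator $(xy)(cd)$ equals $(xy)*(cd)$, and cancelling the central factor $cd$ in the abelian group $(N,\cdot)=(N,*)$ gives $r(xc,yd)=\big((xy)*(cd)\big)/_*\big((x*y)*(cd)\big)=(xy)/_*(x*y)=r(x,y)$. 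Hence $r$ depends only on $xN$ and $yN$, proving~\ref{it:csn1}.

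For~\ref{it:csn2} I would use $\Clo(\A)=\Clo(A,*,r)$. For the inclusion $\supseteq$, term functions of $(A,*)$ are term functions of $\A$; a term function of $\A/N$ lifts to one of $\A$ by interpreting the same word, and as $(N,\cdot)=(N,*)$ is a subloop of $(A,*)$, term functions of $(N,\cdot)$ are restrictions of term functions of $(A,*)$. Since $r$ is coset-constant, composing it on the domain with (lifts of) $\A/N$-terms and on the codomain with $(N,\cdot)$-terms stays in $\Clo(\A)$; thus every $w\in\langle r\rangle_{\A/N,(N,\cdot)}$, and hence $f*w$, lies in $\Clo(\A)$. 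For $\subseteq$ I would induct on the structure of a term $g$ over $(A,*,r)$, proving $g=f*w$ with $f\in\Clo(A,*)$ and $w\in\langle r\rangle_{\A/N,(N,\cdot)}$: a variable $x_i$ equals $x_i*1$; if $g=r(g_1,g_2)$ with $g_i=f_i*w_i$ then $g_i\equiv_N f_i$, so coset-constancy gives $g=r(\overline{f_1},\overline{f_2})\in\langle r\rangle_{\A/N,(N,\cdot)}$ and $g=1*g$; and for a loop operation of $(A,*)$ the centrality identities pull the central factors out, e.g. $(f_1*w_1)*(f_2*w_2)=(f_1*f_2)*(w_1\cdot w_2)$ and $(f_1*w_1)/_*(f_2*w_2)=(f_1/_*f_2)*(w_1\cdot w_2^{-1})$, where $f_1*f_2,\,f_1/_*f_2\in\Clo(A,*)$ and $w_1\cdot w_2,\,w_1\cdot w_2^{-1}\in\langle r\rangle_{\A/N,(N,\cdot)}$ by closure under codomain composition with the operations of the abelian group $(N,\cdot)$. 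This yields the normal form and the lemma; the crux throughout is the coset-constancy of $r$, after which the second part is a clean induction that pushes the central $r$-factors to the outside.
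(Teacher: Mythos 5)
Your proof is correct, and its overall skeleton (run the construction of Corollary~\ref{cor:loop}, then verify the two extra properties of $*$ and $r$; then the normal-form closure argument for part~\eqref{it:csn2}) matches the paper's. Part~\eqref{it:csn2} is essentially identical to the paper's argument --- you are in fact slightly more thorough in also handling the divisions $/_*$, $\backslash_*$ explicitly. Where you genuinely diverge is in how you establish $(N,\cdot)=(N,*)$ and the coset-constancy of $r$ in part~\eqref{it:csn1}. The paper does both in two lines by applying \cite[Proposition 5.7]{FM:CTC} to the Mal'cev term $m(x,y,z)=(x/y)*z$ and the central congruence $\equiv_N$, once with the operation $\cdot$ and once with $r$ itself; this works for \emph{any} reduct produced as in Corollary~\ref{cor:loop} without looking inside the construction. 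You instead unfold the induction of Lemma~\ref{lem:loop}: using that $\equiv_N$ stays central in every reduct (commutators only shrink under taking reducts) and that TC-centrality coincides with Bruck centrality for loops, you get the bilinear identity $(xc)(yd)=(xy)(cd)$ for $c,d\in N$ in each $\A_i$, deduce that every correction term $r_i(z,c)$ vanishes when an argument lies in $N$, and conclude the stronger statement $x*c=xc$ for all $x\in A$, $c\in N$, from which coset-constancy follows by a direct cancellation in $(A,*)$. Your route is more elementary and self-contained (no appeal to the Freese--McKenzie proposition) but is tied to the specific inductive construction of $*$ and quietly uses the cited equivalence of the two notions of centrality for loops; the paper's route is shorter and construction-independent. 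Both are sound.
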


\begin{proof}
 \eqref{it:csn1}
 As in the proof of  Corollary~\ref{cor:loop} we obtain a supernilpotent loop reduct $(A,*)$
 with right division $/$ of $\A = (A,\cdot)$ such that $\A/N = (A,*)/N$ and
\[ r\colon A^2\to N,\ (x,y)\mapsto (xy)/(x*y). \]
 Clearly $(A,*,r)$ and $\A$ are term equivalent and $r(1,1)=1$.
 It remains to show that $\cdot$ and $*$ are equal on $N$ and that $r$ is constant on the cosets modulo $N$.

 Note that $N$ is central in $(A,\cdot)$ and in $(A,*)$. Further it induces the same congruence $\alpha$ on both loops
 with classes of the form $xN = x*N$ for $x\in A$. 

 Let $m(x,y,z) := (x/y)*z$ be a Mal'cev term for $(A,*)$ (and hence for $(A,\cdot)$). Let $c,d\in N$. Then
 \cite[Proposition 5.7]{FM:CTC} yields that
\[   c*d = m(c\cdot 1, 1\cdot 1, 1 \cdot d) = m(c,1,1) \cdot m (1,1,d) = c\cdot d. \]
 Hence $(N,\cdot) = (N,*)$. Further for $a,b\in A$, we get
\[ r(a*c,b*d) = r\bigl(m(a,1,c),m(b,1,d)\bigr) = m\bigl( r(a,b), \underbrace{r(1,1)}_{=1}, \underbrace{r(c,d)}_{=1} \bigr) = r(a,b). \]
 Hence $r$ is constant on the cosets modulo $N$ and~\eqref{it:csn1} is proved.

 \eqref{it:csn2}
 Let $D := \{ f*w \st f\in\Clo(A,*), w\in\langle r\rangle_{\A/N,(N,\cdot)} \}$.
 
 The inclusion $\Clo(\A)\supseteq D$ follows from $*,r\in\Clo(\A)$.
 For the converse note that $D$ contains all projections on $A$. It remains to show that $D$
 is closed under $*$ and $r$. For this let $f_1,f_2\in\Clo(A,*)$ and $w_1,w_2\in\langle r\rangle_{\A/N,(N,\cdot)}$.
 Then
\[ (f_1*w_1)*(f_2*w_2) = (f_1*f_2) *(w_1*w_2) \in D_k \]
 since the image of $w_1,w_2$ is contained in $N$, which is central in $(A,*)$.
 Next
\[ r(f_1*w_1,f_2*w_2) = r(f_1,f_2) \in \langle r\rangle_{\A/N,(N,\cdot)} \] 
 since $r$ is constant on cosets modulo $N$. Thus $\Clo(\A)\subseteq D$.
\end{proof}

 Let $\A$ be a loop as in Lemma~\ref{lem:centralbysn}. Since $\A$ and $(A,*,r)$ are term equivalent, $\A$ has a finite equational basis if and only if $(A,*,r)$ has one. We claim now that this is already the case if the clonoid $\langle r\rangle_{\A/N,(N,\cdot)}$ is finitely based (or, more precisely, the multi-sorted algebra with sorts $A/N$ and $N$ and operations $\cdot$ on $A/N$ and on $N$ as well as $r\colon (A/N)^2 \to N$).
 
Here, an equational basis of $(A,*,r)$ can be obtained as the union of a finite equational basis of the supernilpotent loop $(A,*)$ (which exists by \cite[Theorem 14.16.]{FM:CTC}) and a list of identities corresponding to the equational basis of $\langle r\rangle_{\A/N,(N,\cdot)}$. We do not include a formal proof of this statement, to avoid the technicalities that arise from discussing equational theories of multi-sorted algebras (see e.g. \cite{Ta:MSA}). But, to illustrate the power of this approach, we are going to show that all nilpotent loops of size $pq$ for primes $p,q$ are finitely based. Besides Lemma \ref{lem:centralbysn}, our proof relies on the analysis of clonoids in \cite{Fi:CFF}.

\begin{thm} \label{thm:eqbasispq}
Every nilpotent loop of order $pq$ for primes $p,q$ has a finite equational basis.
\end{thm}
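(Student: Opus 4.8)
The plan is to split the argument according to whether $\A$ is already supernilpotent, and to reduce the remaining case, via Lemma~\ref{lem:centralbysn}, to a single clonoid between two prime fields of coprime order, which is then controlled by~\cite{Fi:CFF}. First I would dispose of the supernilpotent case. This always occurs when $p=q$: a nilpotent loop of prime-power order $p^2$ is a direct product of (one) nilpotent loop of prime-power order, hence supernilpotent. It also occurs when $\A$ is an abelian group (equivalently, when $\A$ is abelian, since abelian loops are abelian groups). In either situation $\A$ is finitely based by Vaughan-Lee~\cite{Va:NPV} (see also~\cite[Theorem 14.16]{FM:CTC}), and we are done.

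So assume $\A$ is not supernilpotent; then $p\neq q$ and, in particular, $\A$ is not abelian. Let $N$ be the center of $\A$, a central normal subloop which is an abelian group. Nilpotence forces $N\neq 1$, non-abelianness forces $N\neq A$, and $|N|$ divides $pq$; hence $|N|$ equals one of the primes, say $s$, and I write $t$ for the other prime. Then $N\cong\Z_s$ is central and $\A/N$ is a nilpotent loop of prime order $t$, hence the cyclic group $\Z_t$, which is supernilpotent. Thus $\A$ is central-by-supernilpotent (indeed $2$-nilpotent), so Lemma~\ref{lem:centralbysn} applies: $\A$ is term equivalent to $(A,*,r)$ with $(A,*)$ supernilpotent, $(N,\cdot)\cong\Z_s$, $\A/N\cong\Z_t$, and $r\colon A^2\to N$ constant on the cosets of $N$, inducing a function $\bar r\colon \Z_t^2\to\Z_s$.

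By the discussion following Lemma~\ref{lem:centralbysn}, it now suffices to show that the clonoid $\langle\bar r\rangle_{\A/N,(N,\cdot)}=\langle\bar r\rangle_{\Z_t,\Z_s}$ is finitely based, that is, that the two-sorted algebra with sorts $\Z_t$ and $\Z_s$, their abelian group operations, and $\bar r\colon\Z_t^2\to\Z_s$ has a finite equational basis. Since $s\neq t$, the domain $\Z_t$ and codomain $\Z_s$ are finite fields of coprime order, which is exactly the setting analysed in~\cite{Fi:CFF}. I would invoke that analysis to conclude that the lattice of clonoids from $\Z_t$ to $\Z_s$ is finite, so that every such clonoid---$\langle\bar r\rangle_{\Z_t,\Z_s}$ in particular---is finitely generated and finitely based. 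A finite equational basis for $(A,*,r)$, and hence for the term-equivalent loop $\A$, is then obtained by combining a finite basis of the supernilpotent loop $(A,*)$ with the finitely many identities describing this clonoid.

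The hard part is this last step: converting the clonoid description of $\Clo(\A)$ from Lemma~\ref{lem:centralbysn} into an honest finite equational basis. This is exactly where coprimality $p\neq q$ is indispensable, and where the fine structure theory of clonoids between fields of coprime order in~\cite{Fi:CFF} does the real work; without it one has no control over the identities satisfied by $\bar r$. By contrast, the reductions leading up to it are routine once one observes that for a non-abelian nilpotent loop of order $pq$ the center is forced to have prime order, so that the loop is automatically central-by-cyclic and thus falls squarely within the scope of Lemma~\ref{lem:centralbysn}.
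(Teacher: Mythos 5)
Your reduction is exactly the paper's: dispose of $p=q$ and the abelian case via \cite[Theorem 14.16]{FM:CTC}, observe that otherwise the center $N$ has prime order with $\A/N$ cyclic of the other prime order, apply Lemma~\ref{lem:centralbysn}, and shift the whole burden to the clonoid $\langle \bar r\rangle_{\A/N,(N,\cdot)}$ between two prime fields of coprime order. The gap sits precisely in the step you yourself flag as the hard part. You conclude that this clonoid is finitely based from two facts: the lattice of clonoids from $\Z_t$ to $\Z_s$ is finite, and each such clonoid is generated by a single unary function (\cite[Theorem 1.2]{Fi:CFF}). Neither fact yields a finite equational basis. Finite generation of a clone or clonoid does not entail finite axiomatizability of its equational theory --- that non-implication is the entire finite basis problem, and there are finite (hence finitely generated) algebras with no finite basis --- and the finiteness of the lattice of clonoids constrains which sets of functions arise, not which identities axiomatize a given one. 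Moreover \cite{Fi:CFF} contains no finite-basis theorem for these clonoids, so there is nothing to ``invoke'' at this point.

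What the paper actually does here is build an explicit normal form and verify its completeness by a dimension count. Writing $C:=\langle r\rangle_{\A/N,(N,\cdot)}$ with domain $\Z_q$ and codomain $\Z_p$, it uses \cite[Lemma 5.3]{Fi:CFF} to find $a\in\Z_q\setminus\{0\}$ of multiplicative order $k$ such that $\{f(a^ix) \st 0\le i<k\}$ is a basis of $C^{[1]}$, and \cite[Lemma 4.1, Proposition 4.3]{Fi:CFF} to show $\dim C^{[n]} = k\cdot\frac{q^n-1}{q-1}$. It then records the finitely many identities $f(cx)=\sum_{i=0}^{k-1} d_i f(a^ix)$ and checks that these, together with $f(x+f(y))=f(x)$, $f(x+qy)=f(x)$, $pf(x)=0$ and a basis for $(\Z_q\times\Z_p,+)$, rewrite every term into a normal form $\sum_i d_ix_i + t'$ with $t'$ a linear combination over a set of exactly $k\cdot\frac{q^n-1}{q-1}$ generators; the dimension count certifies that this rewriting loses no identities, i.e., that the listed identities form a basis. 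This construction is the actual content of the proof, and without it (or some substitute argument) your proof does not close.
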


\begin{proof}  
Let $\A$ be a nilpotent loop of order $pq$ for primes $p,q$. If $p = q$ or $\A$ is abelian, then $\A$ has a finite
equational basis by~\cite[Theorem 14.16.]{FM:CTC}). So assume that $p \neq q$ and that the center $N$ of $\A$ has
size $p$.

By Lemma~\ref{lem:centralbysn} we may assume that
$\A$ is term equivalent to $(\Z_q\times\Z_p,+,r)$ with $r\colon A^2 \to N$ constant on cosets of $N$ and $r(0,0)=0$.
In the following, we will abuse notation and use the same symbol for
$r\colon A^2 \to N$ and the function $r\colon \Z_q^2 \to \Z_p$ induced by it.
Then $C := \langle r\rangle_{\A/N,(N,\cdot)}$ is a clonoid of $0$-preserving functions from $(\Z_q,+)$ to $(\Z_p,+)$.
By~\cite[Theorem 1.2]{Fi:CFF} every such clonoid is already generated by a single unary function.
Hence $\A$ is term equivalent to $(\Z_q\times\Z_p,+,f)$ for some unary $f\colon \Z_q \to \Z_p$ satisfying $f(0)=0$.
Our first goal is to show that $C$ (or, more precisely, the multi-sorted algebra with domains $\Z_q$, $\Z_p$ and
operations $+^{\Z_q}$, $+^{\Z_p}$ and $f\colon \Z_q \to \Z_p$) is finitely based.

We denote the $n$-ary functions of $C$ by $C^{[n]}$. Then $C^{[n]}$ is a subspace of $\Z_p^{\Z_q^n}$ for every $n\geq 1$. We claim that
\begin{enumerate}
\item
 there exists $a \in \Z_q \setminus \{0\}$ of multiplicative order $k|q-1$ such that  
 $B := \{ f(a^ix) \st 0\leq i\leq k-1 \}$ is a basis of $C^{[1]}$ and
\item
 $\dim(C^{[n]}) = k \cdot \frac{q^{n}-1}{q-1}$.
\end{enumerate}
(1) was essentially observed in \cite[Lemma 5.3]{Fi:CFF}. To verify (2), let $\mathcal L$ be the set of all
$1$-dimensional subspaces of $\Z_q^n$. It follows from \cite[Lemma 4.1]{Fi:CFF} that for every $h \in C^{[1]}$
and every $L \in \mathcal{L}$ generated by $\mathbf{y} \in\Z_q^n$, there exists a function $h_L \in C^{[n]}$
such that $h_L(\lambda \mathbf y) = h(\lambda)$ for all $\lambda\in\Z_q$ and
$h_L(\mathbf x) = 0$ for all $\mathbf x\in\Z_q^n \setminus L$.
Moreover, as in \cite[Proof of Proposition 4.3.]{Fi:CFF}, one can see that $g \in C^{[n]}$ iff $g$ is the sum of
functions of the form $h_L$ for $h \in C^{[1]}$, $L \in \mathcal L$. Combining these two observations,
it follows that $\{h_L \st h \in B, L \in \mathcal L\}$ forms a basis of $C^{[n]}$.
Hence $\dim(C^{[n]}) = |B| \cdot |\mathcal L| = k \cdot \frac{q^{n}-1}{q-1}$ and (2) is proved.

By (1), for every $c\in \Z_q$, there are coefficients $d_0,\ldots,d_{k-1} \in \Z_p$ such that 
\begin{align}
f(cx) = \sum_{i=0}^{k-1} d_i f(a^ix). \label{eq:basis}
\end{align}
We claim that all identities of the form \eqref{eq:basis} together with the identities holding in
$\Z_q$ and $\Z_p$ form an equational basis of the clonoid $C$.

To see this, recall that $C^{[n]}$ is the linear closure of all functions $f(\sum_{i=1}^n \alpha_i x_i)$ for
$\alpha = (\alpha_1,\alpha_2,\ldots,\alpha_n) \in \Z_q^k \setminus \{0\}$. In fact, we only need
coefficients from the set $A_n := \{(\alpha_1,\ldots,\alpha_n) \in\Z_q^k\st \alpha_1= \ldots, \alpha_{i-1} = 0, \alpha_i = a^j $ for some $i\leq n, 0 \leq j < k\}$ by the identities \eqref{eq:basis}.
Since $|A_n| = k \cdot \frac{q^{n}-1}{q-1}$, we get from (2) that the set
$B_n := \{f(\sum_{i=1}^n \alpha_i x_i) \st \alpha \in A_n\}$ is a basis of $C^{[n]}$.

Thus any $n$-ary operation $t \in C$ is equal to a unique linear combination of elements from $B_n$;
on a syntactic level, we can think about this as a normal form of $t$ as a term operation of the multi-sorted algebra given by $f$, and the addition on $\Z_p$ and $\Z_q$. Moreover, any term can be rewritten into such a normal form
by the identities \eqref{eq:basis} and finitely many identities axiomatizing $\Z_p$ and $\Z_q$. So, in this sense, $C$ has a finite equational basis.

In order to obtain an equational basis for $(\Z_q\times\Z_p,+,f)$ note that every term operation $t(x_1,\ldots,x_n)$ of $(\Z_q\times\Z_p,+,f)$ can be uniquely written as $\sum_{i = 1}^n d_i x_i + t'(x_1,\ldots,x_n)$,
where $0\leq d_i <pq$ for all $i\leq n$ and $t'$ is the normal form of an element of $C$.
Thus, any set of identities that allows us to rewrite terms over $(\Z_q\times\Z_p,+,f)$ into such normal forms is an equational basis. It is easy to see that the finite set $\Sigma$ given by
\begin{enumerate}
\item a finite equational basis of $(\Z_q\times\Z_p,+)$,
\item $f(x + f(y)) = f(x)$,
\item $f(x+qy) = f(x)$,
\item $p f(x) = 0$,
\item all identities of the form \eqref{eq:basis}
\end{enumerate}
forms such an equational basis. As $\A$ and  $(\Z_q\times\Z_p,+,f)$ are term equivalent,
this finishes the proof.
\end{proof}

Note that, in the construction of the equational basis $\Sigma$ in the proof of Theorem \ref{thm:eqbasispq},
 the identities in (1) axiomatize $(A,*)$, while the identities (2),(3) and (4) correspond to the identities satisfied by the domain $\A/N$ and codomain $(N,\cdot)$ of the clonoid $C = \langle r \rangle_{\A/N,(N,\cdot)}$ respectively, and (5) concerns all actual non-trivial multi-sorted identities from the equational basis of the clonoid $C$.
\pmend

In a very similar way, we can obtain an equational basis of $(A,*,r)$ in the general situation of Lemma \ref{lem:centralbysn} from an equational basis of $\langle r \rangle_{\A/N,(N,\cdot)}$, together with finite equational bases of $(A,*)$, $\A/N$ and $N$.

\section{Reducts of groups}  \label{sec:groups}

In view of Theorem~\ref{thm:loop} it is natural to ask
\begin{enumerate}
\item whether also non-nilpotent loops may have supernilpotent reducts and
\item when nilpotent loops have abelian reducts? 
\end{enumerate}
 We give answers in the setting of groups starting with the first question.

\begin{lem}
 No finite non-nilpotent group has a supernilpotent loop (or nilpotent group) as reduct.
\end{lem}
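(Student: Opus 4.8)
The plan is to absorb the parenthetical case and then translate the whole statement into a statement about multiplication groups. First note that for groups nilpotence and supernilpotence coincide, and a group is in particular a loop; hence a \emph{nilpotent group} reduct is a special case of a \emph{supernilpotent loop} reduct, and it suffices to prove: if a finite group $\G = (A,\cdot)$ has a supernilpotent loop reduct $(A,*)$, then $\G$ is nilpotent. By Wright's theorem the loop $(A,*)$ is supernilpotent iff $\Mlt(A,*)$ is nilpotent, so we may assume $\Mlt(A,*)$ is nilpotent. On the other side, the classical correspondence for groups gives that $\G$ is nilpotent iff $\Mlt(\G)$ is nilpotent: the left translations form a subgroup $\{L_a \st a\in A\}\cong\G$ of $\Mlt(\G)$, while $(a,b)\mapsto(x\mapsto axb)$ exhibits $\Mlt(\G)$ as a homomorphic image of $\G\times\G$, and nilpotence passes both to subgroups and to quotients of direct products. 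Thus it is enough to deduce nilpotence of $\Mlt(\G)$ from nilpotence of $\Mlt(A,*)$.

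The bridge I would try to establish is that $\G$ (equivalently $\Mlt(\G)$) occurs as a section of $\Mlt(A,*)$, so that nilpotence of $\Mlt(A,*)$ forces nilpotence of $\G$. Since $*$ is a term operation of $\G$, there is a group word $w$ with $x*y=w(x,y)$, and every translation $L^*_a\colon x\mapsto w(a,x)$ and $R^*_a\colon x\mapsto w(x,a)$ of $(A,*)$ is a unary polynomial of $\G$ lying in $\Mlt(A,*)$. The loop axioms on $*$ rigidly constrain $w$: writing $e$ for the $*$-identity, the equations $w(e,y)=y$ and $w(x,e)=x$ together with the Latin-square property force, after passing to the abelianization of $\G$, that the exponent sums of $x$ and of $y$ in $w$ are both $\equiv 1$ modulo the exponent of $\A/[A,A]$ and that $e\in[A,A]$. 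I would use this to show that, modulo each term of the lower central series of $\G$, the translations $R^*_a$ and $L^*_a$ reduce to affine maps $x\mapsto u\,x\,v$; recognizing these as elements of $\Mlt(\G)$ and lifting through the central series, one recovers all $\cdot$-translations as products of $*$-translations and their inverses, giving $\Mlt(\G)\leq\Mlt(A,*)$ (and in fact one expects equality $\Mlt(A,*)=\Mlt(\G)$).

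The hard part is exactly this last bridge. A priori $w$ is an arbitrary group word, so its translations are arbitrary unary polynomials of $\G$ rather than the affine maps $x\mapsto axb$ that generate $\Mlt(\G)$, and it is not clear how to extract genuine $\cdot$-translations from products of them. The key technical point to nail down is that being a loop forces enough rigidity on $w$ --- essentially that $w$ differs from plain multiplication only by inner/commutator corrections that do not change the multiplication group --- and this is where the real work lies. An instructive special case to check first is when $\A/[A,A]$ has squarefree exponent: there the exponent-sum constraints already pin $w$ down to $xy$ or $yx$ up to such corrections, so that $\Mlt(A,*)=\Mlt(\G)$ is transparent and the reduction to the classical group criterion is immediate.
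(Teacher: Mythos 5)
Your setup is fine: for finite groups nilpotence and supernilpotence coincide, so the parenthetical case reduces to the main one, and invoking Wright's theorem to replace ``$(A,*)$ is supernilpotent'' by ``$\Mlt(A,*)$ is nilpotent'' is legitimate. But the proof then rests entirely on the bridge $\Mlt(\G)\leq\Mlt(A,*)$ (or that $\G$ is a section of $\Mlt(A,*)$), and this is exactly what you do not establish --- you say so yourself. Worse, the route you sketch for it is circular in the only case that matters: you propose to show that the $*$-translations ``reduce to affine maps $x\mapsto uxv$ modulo each term of the lower central series of $\G$'' and then to ``lift through the central series.'' For a non-nilpotent $\G$ the lower central series stabilizes at a nontrivial subgroup, so this induction never reaches the identity; assuming it does is tantamount to assuming $\G$ is nilpotent, which is the conclusion. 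The exponent-sum constraints on the word $w$ only control its image in $\G/[\G,\G]$ and give no handle on what $w$ does below the derived subgroup, so the claimed rigidity (``$w$ differs from $xy$ only by commutator corrections that do not change the multiplication group'') is unsupported. Note also that a translation $x\mapsto w(a,x)$ is merely a bijective unary polynomial of $\G$, and such maps need not lie in $\Mlt(\G)$ (nor conversely), so neither inclusion between the two multiplication groups is automatic.

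The paper avoids multiplication groups altogether and the argument is much shorter: every subgroup of $\G$ is closed under the term operation $*$ and hence is a subloop of $(G,*)$; since $(G,*)$ is supernilpotent, it is a direct product of nilpotent loops of prime power order and therefore has, for each prime $p$, a \emph{unique} subloop $P$ of $p$-power order with $|G|/|P|$ coprime to $p$. Every Sylow $p$-subgroup of $\G$ is such a subloop, so it equals $P$; thus all Sylow subgroups of $\G$ are unique, hence normal, and $\G$ is nilpotent. If you want to salvage your approach, you would need an honest proof that $\Mlt(\G)$ embeds into (a quotient of) $\Mlt(A,*)$ for an arbitrary loop reduct, and at present there is no evidence that this holds.
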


\begin{proof}
 Let $\G$ be a finite group with supernilpotent loop reduct $(G,*)$.  
 Then every subgroup of $\G$ is preserved by $*$ and hence also a subloop of $(G,*)$.
 Since $(G,*)$ is supernilpotent, it has a unique subloop $P$ of $p$-power order with $|G|/|P|$ coprime to $p$
 for any prime $p$.
 Hence $P$ is the unique Sylow $p$-subgroup of $\A$ for any prime $p$. Thus $\A$ itself is nilpotent. 
\end{proof}

Any $2$-nilpotent group $(G,\cdot)$ of odd order has an abelian reduct by the following construction
due to Baer~\cite[Theorem B.1]{Ba:GAC} and called the ``Baer trick'' in~\cite[Lemma 4.37]{Is:FGT}.
 For such a group, the squaring map $s\colon G\to G,\ x\mapsto x^2$, is a bijection with inverse $x\mapsto x^k$
 for some $k\in \N$.
 It is then not hard to see that $(G,*)$ with
\[ x*y := xy\, [y,x]^{k} \] 
is an abelian group reduct of $\G$.
 The construction clearly fails if $[y,x]$ has no pre-image under $s$, in particular,
 if the derived subgroup of $\G$ has even exponent.
 In that case we show that $\G$ does not have any (polynomial) abelian group reduct at all.
 This implies that even in the group case we cannot strengthen Theorem~\ref{thm:loop} to always
 obtain abelian reducts.

 We start with a general characterization of Mal'cev reducts of $2$-nilpotent groups.

\begin{lem} \label{lem:2nil}
 Let $\G = (G,\cdot)$ be a $2$-nilpotent group.
\begin{enumerate}
\item \label{it:2nil1}
 Then $m$ is a Mal'cev polynomial of $\G$ iff there exists some $c\in\Z$ such that
 \[ m(x,y,z) = xy^{-1}z\, \left([x,y]\, [x,z]^{-1}\, [y,z]\right)^c  \text{ for all } x,y,z\in G. \]
 In particular, every Mal'cev polynomial is a Mal'cev term.
\item \label{it:2nil2}
 Every polynomial loop reduct of $\G$ is isomorphic to some group reduct of $\G$. 
 Further $(G,*)$ is a group reduct of $\G$ iff there exists some $c\in\Z$ such that
 \[ x*y = xy [x,y]^c  \text{ for all } x,y\in G. \]
\end{enumerate} 
\end{lem}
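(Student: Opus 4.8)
The engine of the whole proof is a normal form for polynomial operations on a class-$2$ group, which I would establish first. Since $\G$ is $2$-nilpotent, its commutator subgroup $G'$ lies in $Z(G)$, so every $[x,y]$ is central and the commutator map is bi-additive, $[xy,z]=[x,z][y,z]$ and $[x,yz]=[x,y][x,z]$. Using $yx=xy[x,y]^{-1}$ to collect variables to the left and pushing the resulting central commutators to the right, every polynomial $p(x_1,\dots,x_n)$ of $\G$ can be written as
\[ p(x_1,\dots,x_n) = g_0 \cdot x_1^{e_1}\cdots x_n^{e_n} \cdot \prod_{i} [x_i,h_i] \cdot \prod_{i<j}[x_i,x_j]^{c_{ij}} \]
for constants $g_0,h_i\in G$ and integers $e_i,c_{ij}$; the factors $[x_i,h_i]$ record commutators with the constants of $p$, and for a \emph{term} operation (no constants) both $g_0$ and all the $[x_i,h_i]$ disappear. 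This collection step is routine, but the bookkeeping is the part that most needs care.

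For part \eqref{it:2nil1} I would work modulo $G'$. A Mal'cev polynomial $m$ induces a Mal'cev polynomial on the abelian group $G/G'$, where $m(x,y,y)=x=m(y,y,x)$ forces the ``linear part'' to be $xy^{-1}z$ and the constant to lie in $G'$. Hence $m(x,y,z)=xy^{-1}z\cdot w(x,y,z)$ with $w$ central-valued, and the normal form gives $w=g_0\,[x,h_1][y,h_2][z,h_3]\,[x,y]^{\alpha}[x,z]^{\beta}[y,z]^{\gamma}$ with $g_0\in G'$. Substituting the two Mal'cev identities into $w$ and reading off the conditions at $x=y=1$, at $y=1$, at $x=1$, and symmetrically, kills $g_0$ and all three constant-commutators and yields the identities $[u,v]^{\alpha+\beta}=1=[u,v]^{\beta+\gamma}$; setting $c:=\alpha$ then gives $w=([x,y][x,z]^{-1}[y,z])^{c}$. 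The resulting $m$ is constant-free, hence a term, which proves the last sentence of \eqref{it:2nil1}; the converse is a direct check that each such $m$ is Mal'cev. Throughout one must argue at the level of identities in $x,y,z$, not fixed elements.

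For part \eqref{it:2nil2} I would first check, using bi-additivity and centrality, that each $x\circ y:=xy[x,y]^{c}$ is associative and hence makes $(G,\circ)$ a group with identity $1$; this gives the right-to-left implication of the characterization of group reducts. A short computation shows that the group Mal'cev term of each $(G,\circ)$ is one of the Mal'cev polynomials $m(x,y,z)=xy^{-1}z([x,y][x,z]^{-1}[y,z])^{c'}$ from \eqref{it:2nil1}, and conversely every such $m$ arises this way. Now let $(G,*)$ be any polynomial loop reduct, with identity $e$. Its loop Mal'cev term $m_*(x,y,z)=(x/_{*}y)*z$ is a polynomial of $\G$, so by \eqref{it:2nil1} it coincides with the $\circ$-Mal'cev term of some group reduct $(G,\circ)$. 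Since $x*y=m_*(x,e,y)$ and the $\circ$-inverse of $e$ equals $e^{-1}$, this rewrites as $x*y=x\circ e^{-1}\circ y$, so $\phi(x):=e^{-1}\circ x$ is an isomorphism from $(G,*)$ onto $(G,\circ)$. Finally, if $*$ is itself a term and $(G,*)$ a group, the constant-free normal form gives $x*y=x^{a}y^{b}[x,y]^{c}$; reducing modulo $G'$ forces $a=b=1$, and the absence of constants leaves $x*y=xy[x,y]^{c}$.

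The step I expect to be most delicate is the one where I need $m_*$ to be a polynomial of $\G$, which rests on the division $/_{*}$ of the loop $(G,*)$ being a polynomial of $\G$. For finite $\G$ this is immediate, since a finite loop is term equivalent to its multiplication reduct, so $/_{*}$ is a term of $(G,*)$ and hence a polynomial of $\G$; I would either assume finiteness here or argue the availability of $/_{*}$ directly. The only other friction is the collection argument and the careful extraction of the coefficient relations from the Mal'cev and loop identities.
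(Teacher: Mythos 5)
Your proposal follows essentially the same route as the paper: collect every polynomial of a class-$2$ group into the normal form $g_0x_1^{e_1}\cdots x_n^{e_n}\prod_i[x_i,h_i]\prod_{i<j}[x_i,x_j]^{c_{ij}}$, pin down the exponents of a Mal'cev polynomial by substituting the Mal'cev identities, and then recover a polynomial loop reduct's multiplication from its Mal'cev polynomial $(x/_{*}y)*z$ by specializing $y$ to the loop identity $e$, with translation by $e$ giving the isomorphism onto the group reduct $xy[x,y]^c$ (the paper likewise takes the divisions of the loop reduct to be among its polynomial operations, so your worry about $/_{*}$ is resolved exactly as you suggest). The one slip is at the very end: reducing $x*y=x^ay^b[x,y]^c$ modulo $G'$ only yields $a\equiv b\equiv 1$ modulo the exponent of $G/G'$, which can be strictly smaller than the exponent of $G$ (e.g.\ for $Q_8$), so instead one should observe that the identity of a term loop reduct must be $1$ (since $\{1\}$ is a subalgebra, $1*1=1$, and the only solution of $x*x=x$ in a loop is its identity) and then evaluate $x*1=x^a$ and $1*y=y^b$ to force $x^a=x$ and $y^b=y$.
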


\begin{proof}
 \eqref{it:2nil1} Assume $m(x_1,x_2,x_3)$ is a Mal'cev polynomial of $\G$ using constants $g_1,\dots,g_n\in G$.
 Since $\G$ is $2$-nilpotent, commutators are central and we can write $m$ in the form
\begin{equation} \label{eq:mpol1}
 m(x_1,x_2,x_3) = dx_1^{a_1} x_2^{a_2} x_3^{a_3}\, \prod_{i=1}^3\prod_{j=1}^n [x_i,g_j]^{b_{ij}}\, \prod_{1\leq i < j \leq 3} [x_i,x_j]^{c_{ij}}
\end{equation}
 for some integers $a_i, b_{ij}, c_{ij}$ and a constant $d\in G$.

 From $m(1,1,1)=1$ we obtain $d=1$.
 Using the Mal'cev identities we see
\begin{align*}
 & x_1 = m(x_1,1,1) = x_1^{a_1} \prod_{j=1}^n [x_1,g_j]^{b_{1j}}, \\
  & x_3 = m(1,1,x_3) = x_3^{a_3} \prod_{j=1}^n [x_3,g_j]^{b_{3j}}.
\end{align*}
 Replacing the corresponding factors in~\eqref{eq:mpol1}, we obtain the simplification
\begin{equation} \label{eq:mpol2}
  m(x_1,x_2,x_3) = x_1 x_2^{a_2} x_3\, \prod_{j=1}^n [x_2,g_j]^{b_{2j}}\, \prod_{1\leq i < j \leq 3} [x_i,x_j]^{c_{ij}}.
\end{equation}
 By the Mal'cev identity again we have
\[ 1 = m(x_2,x_2,1) = x_2 \underbrace{x_2^{a_2}\, \prod_{j=1}^n [x_2,g_j]^{b_{2j}}}_{=x_2^{-1}}. \]
 Hence we can reduce~\eqref{eq:mpol1} further to 
\begin{equation} \label{eq:mpol3}
  m(x_1,x_2,x_3) = x_1 x_2^{-1} x_3\, \prod_{1\leq i < j \leq 3} [x_i,x_j]^{c_{ij}}.
\end{equation}
 Finally we see from
\[ x = m(x,y,y) = x [x,y]^{c_{12}+c_{13}}\quad \text{ and } \quad x = m(y,y,x) = x [y,x]^{c_{13}+c_{23}} \]
 that we may choose $c_{12} = -c_{13} = c_{23}$.
 Writing $c$ for $c_{13}$ yields the form of $m$ claimed in~\eqref{it:2nil1}.

 \eqref{it:2nil2} Let  $*_e,/,\backslash,e$ be polynomial functions on $\G$ such that $\A := (G,*_e,/,\backslash)$
 is a loop with identity $e$. Then $(x/y)*_ez$ is a Mal'cev polynomial of $\A$ and hence of $\G$. So by~\eqref{it:2nil1}
 we have $(x/y)*_ez = xy^{-1}z\, \left([x,y]\, [x,z]^{-1}\, [y,z]\right)^c$
 for some $c\in\Z$. Setting $y=e$ we see that
\[ x*_ez = (x/e)*_ez = xe^{-1}z\, \left([x,e][x,z]^{-1}[e,z]\right)^c \text{ for all } x,z\in G. \]
 For $e=1$, the identity of $\G$, this simplifies further to
\[ x*_1 z = xz\, [x,z]^{-c}. \]
 Using that $\G$ is $2$-nilpotent and commutators are bilinear,
 it is now straightforward to check that $(G,*_1)$ is associative and hence a group.
 Further $\varphi\colon (G,*_1) \to (G,*_e),\ x\mapsto xe$, is easily seen to be an isomorphism.
 Since $(G,*_1)$ is a group, so is $(G,*_e)$. 
 Thus every polynomial loop reduct of $\G$ is a group.
 Since $1$ is the only constant term function of $\G$, every (term) loop reduct of $\G$ is a group with
 multiplication $*_1$.
\end{proof}

\begin{cor} \label{cor:2nil}
 A $2$-nilpotent group $\G$ has an abelian group as (polynomial) reduct iff the derived subgroup of $\G$
 has odd exponent.
\end{cor}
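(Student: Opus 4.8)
**The plan is to prove both directions using the explicit classification of group reducts given in Lemma~\ref{lem:2nil}\eqref{it:2nil2}.**

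By Lemma~\ref{lem:2nil}\eqref{it:2nil2}, every group reduct of a $2$-nilpotent group $\G = (G,\cdot)$ has multiplication of the form $x * y = xy\,[x,y]^c$ for some $c\in\Z$, and conversely every such $*$ is a group reduct. So the question of when $\G$ has an abelian reduct reduces to asking for which integers $c$ the operation $x*y = xy\,[x,y]^c$ is commutative. I would first compute the commutator of $x$ and $y$ in $(G,*)$. Since commutators are central and bilinear in a $2$-nilpotent group, one has $x*y = xy\,[x,y]^c$ and $y*x = yx\,[y,x]^c = xy\,[y,x]\,[y,x]^c = xy\,[x,y]^{-(c+1)}$. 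Hence $x*y = y*x$ holds for all $x,y$ if and only if $[x,y]^c = [x,y]^{-(c+1)}$, i.e.\ $[x,y]^{2c+1} = 1$, for all $x,y\in G$. This says precisely that the exponent of the derived subgroup $\G'$ (which is generated by the central commutators $[x,y]$) divides $2c+1$.

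For the backward direction, suppose $\G'$ has odd exponent $m$. Then $\gcd(2,m)=1$, so there is an integer $c$ with $2c \equiv -1 \pmod{m}$, i.e.\ $2c+1 \equiv 0 \pmod m$. For this $c$ the operation $x*y = xy\,[x,y]^c$ satisfies $[x,y]^{2c+1}=1$ for all $x,y$, so $(G,*)$ is commutative; being a group reduct by Lemma~\ref{lem:2nil}\eqref{it:2nil2}, it is an abelian group reduct of $\G$. For the forward direction, suppose $\G$ has some abelian (polynomial) reduct; by Lemma~\ref{lem:2nil}\eqref{it:2nil2} every polynomial loop reduct is isomorphic to a group reduct, so we may take it of the form $x*y = xy\,[x,y]^c$. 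Commutativity forces $[x,y]^{2c+1}=1$ for all $x,y$, whence the exponent of $\G'$ divides the odd number $2c+1$ and is therefore odd.

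**The only genuinely delicate point is the reduction from arbitrary abelian reducts to the standard form.** The statement of the corollary speaks of a ``(polynomial) reduct,'' so I must make sure that an abelian group arising as a \emph{polynomial} reduct is still covered; this is exactly the content of the first sentence of Lemma~\ref{lem:2nil}\eqref{it:2nil2}, that every polynomial loop reduct of $\G$ is isomorphic to a group reduct of the standard form. Once that is invoked, the remaining work is the short bilinear commutator computation above, which is routine given that commutators in a $2$-nilpotent group are central and bilinear. I therefore expect no serious obstacle beyond correctly tracking the sign in $[y,x] = [x,y]^{-1}$ and confirming that the congruence $2c+1\equiv 0$ is solvable precisely when the exponent is odd.
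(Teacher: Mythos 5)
Your proof is correct and follows essentially the same route as the paper: invoke Lemma~\ref{lem:2nil}\eqref{it:2nil2} to reduce to reducts of the form $x*y = xy\,[x,y]^c$, observe that commutativity is equivalent to $[x,y]^{2c+1}=1$ for all $x,y$, and note that such a $c$ exists precisely when the derived subgroup has odd exponent. Your write-up merely makes the sign computation and the solvability of $2c+1\equiv 0$ more explicit than the paper does.
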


\begin{proof}
 By Lemma~\ref{lem:2nil} every polynomial group reduct of $\G = (G,\cdot)$ is isomorphic to $(G,*)$ with
 \[ x*y = xy\, [x,y]^c \]
 for some $c\in\Z$. Now $(G,*)$ is abelian iff
\[ xy\, [x,y]^c = yx\, [y,x]^c. \]
 The latter is equivalent to
\[  [x,y]^{2c+1} = 1 \text{ for all } x,y\in G. \]
 Clearly such an integer $c$ exists iff the derived subgroup of $\G$ has odd exponent (possibly $1$).
\end{proof}

\begin{exa}
 Let $\G = (G,\cdot)$ be a group with derived subgroup of exponent $2$ (e.g., any non-abelian group of order $8$). 
 By Lemma~\ref{lem:2nil}
\begin{enumerate}
\item every loop reduct of $\G$ is equal to $\G$ or $\G^{op} = (G,*)$ with $x*y = yx$,
\item every polynomial loop reduct of $\G$ is isomorphic to $\G$,
\item every Mal'cev polynomial of $\G$ is equal to $xy^{-1}z$ or $zy^{-1}x$.
\end{enumerate}
 We further remark that $(G,xy^{-1}z)$ is a \emph{minimal} Mal'cev algebra,  meaning that it has a Mal'cev term,
 but does not have any proper reducts with a Mal'cev term.  This directly implies that it is also a minimal Taylor
 algebra in the sense of \cite{BBBKZ:MT} since for finite solvable algebras having a Taylor term is equivalent to
 having a Mal'cev term by results from tame congruence theory \cite{HM:SFA}.
\end{exa}

 We give two more examples showing that in general it is not possible to decrease the nilpotence class
 of a $2$-group or of a group of class greater than $2$ by taking a group reduct:
 
 For $n\geq 2$, let $\D_{2^{n+1}}$ be the dihedral group of order $2^{n+1}$. 
 Then $\D_{2^{n+1}}$ has a $2$-element center and is $n$-nilpotent but not $n-1$-nilpotent.
 By Lemma~\ref{lem:2nil} and induction on $n$ it follows that every group reduct of $\D_{2^{n+1}}$ has the same properties,
 in particular, is not $k$-nilpotent for $k<n$.

The wreath product $\Z_3\wr\Z_3$ is nilpotent of class $3$.
We claim that $\G$ has no abelian group reduct $(G,*)$. Suppose otherwise. Then every $g\in G$ generates the
same subgroup and hence has the same order in $\G$ and in $(G,*)$. Since $\G$ has exponent $9$ and a normal
subgroup $N$ isomorphic to $\Z_3^3$, this implies that $(G,*)$ is isomorphic to $\Z_9\times\Z_3^2$.
Hence every $g\in G\setminus N$ has order $9$, which is clearly not true in $\G$. So $\G$ has no abelian reduct.
 By Corollary~\ref{cor:2nil} it does not have a $2$-nilpotent group reduct either.

\bibliographystyle{plain}

\begin{thebibliography}{1}
\bibitem{AM:SAHC}
E.~Aichinger and N.~Mudrinski.
\newblock Some applications of higher commutators in {M}al'cev algebras.
\newblock {\em Algebra Universalis}, 63(4):367--403, 2010.

\bibitem{Ba:GAC}
R.~Baer.
\newblock Groups with abelian central quotient group.
\newblock {\em Trans. Amer. Math. Soc.}, 44(3):357--386, 1938.

\bibitem{BBBKZ:MT}
L. Barto, Z. Brady, A. Bulatov, M. Kozik and D. Zhuk. 
\newblock Unifying the three algebraic approaches to the CSP via minimal Taylor algebras,  3 (2024), 
\newblock {\em TheoretiCS} 3:1--76, 2024.

\bibitem{BM:SPD}
W.~Bentz and P.~Mayr.
\newblock Supernilpotence prevents dualizability.
\newblock {\em J. Aust. Math. Soc.}, 96(1):1--24, 2014.

\bibitem{Br:CTL}
R.~H. Bruck.
\newblock Contributions to the theory of loops.
\newblock {\em Trans. Amer. Math. Soc.}, 60:245--354, 1946.

\bibitem{Br:SBS}
R.~H. Bruck.
\newblock {\em A survey of binary systems. }
\newblock Vol. 20. Berlin: Springer, 1971.

\bibitem{Bu:NFM}
A.~Bulatov.
\newblock On the number of finite {M}al\cprime tsev algebras.
\newblock In {\em Contributions to general algebra, 13 (Velk\'e Karlovice,
  1999/Dresden, 2000)}, pages 41--54. Heyn, Klagenfurt, 2001.

\bibitem{DV:ENL}
D.~Daly and P.~Vojt\v{e}chovsk\'{y}.
\newblock Enumeration of nilpotent loops via cohomology.
\newblock {\em J. Algebra}, 322(11):4080--4098, 2009.

\bibitem{Fi:CFF}
S.~Fioravanti
\newblock Closed sets of finitary functions between finite fields of coprime order.
\newblock {\em Algebra universalis}, 81(4):Paper No. 52, 14, 2020.

\bibitem{FM:CTC}
R.~Freese and R.~N. McKenzie.
\newblock {\em Commutator Theory for Congruence Modular Varieties}, volume 125
  of {\em London Math. Soc. Lecture Note Ser.}
\newblock Cambridge University Press, 1987.
\newblock Available from
  \verb+http://math.hawaii.edu/~ralph/Commutator/comm.pdf+.
  
 \bibitem{HM:SFA}
 D.~Hobby and R.~McKenzie.
\newblock {\em The structure of finite algebras}.
\newblock Contemporary Mathematics, Volume 76, 1988.

\bibitem{Is:FGT}
I.~M. Isaacs.
\newblock {\em Finite group theory}, volume~92 of {\em Graduate Studies in
  Mathematics}.
\newblock American Mathematical Society, Providence, RI, 2008.

\bibitem{KS:ISSN}
K.~Kearnes and {\'A}.~Szendrei.
\newblock Is supernilpotence super nilpotence?
\newblock {\em Algebra Universalis}, 81(1):Paper No. 3, 10, 2020.

\bibitem{Ma:VLN}
P.~Mayr.
\newblock Vaughan-{L}ee's nilpotent loop of size 12 is finitely based.
\newblock {\em Algebra Universalis}, 85(1):Paper No. 2, 12, 2024.

\bibitem{MW:CBM}
P.~Mayr and P.~Wynne.
\newblock Clonoids between modules.
\newblock {\em Internat. J. Algebra Comput.}, 34(4):543--570, 2024.

\bibitem{MM:SNN}
M.~Moore and A.~Moorhead.
\newblock Supernilpotence need not imply nilpotence.
\newblock {\em J. Algebra}, 535:225--250, 2019.

\bibitem{Mo:HCT}
A.~Moorhead.
\newblock Higher commutator theory for congruence modular varieties.
\newblock {\em J. Algebra}, 513:133--158, 2018.

\bibitem{SS:TCN}
{\v Z}.~Semani{\v s}inov\'a and D.~Stanovsk\'y.
\newblock Three concepts of nilpotence in loops.
\newblock {\em Results Math.}, 78(4):Paper No. 119, 15, 2023.

\bibitem{SV:CTL}
D.~Stanovsk\'y and P.~Vojt{\v e}chovsk\'y.
\newblock Commutator theory for loops.
\newblock {\em J. Algebra}, 399:290--322, 2014.

\bibitem{SV:SG3}
D.~Stanovsk\'y and P.~Vojt{\v e}chovsk\'y.
\newblock Supernilpotent groups and {$3$}-supernilpotent loops.
\newblock {\em J. Algebra Appl.}, 23(9):Paper No. 2450138, 2024.

\bibitem{Ta:MSA}
A.~Tarlecki. (2013). 
\newblock Some nuances of many-sorted universal algebra: A review. 
\newblock {Bulletin of EATCS}, 2(104), 2011.

\bibitem{Va:NPV}
M.~R. Vaughan-Lee.
\newblock Nilpotence in permutable varieties.
\newblock In {\em Universal algebra and lattice theory ({P}uebla, 1982)},
  volume 1004 of {\em Lecture Notes in Math.}, pages 293--308. Springer,
  Berlin, 1983.

\bibitem{Wr:MGL}
C.~R.~B. Wright.
\newblock On the multiplication group of a loop.
\newblock {\em Illinois J. Math.}, 13:660--673, 1969.  
\end{thebibliography}

\def\cprime{$'$}

\end{document}